\DeclareMathAlphabet{\mathpzc}{OT1}{pzc}{m}{it}
\theoremstyle{plain}
\newtheorem{thm}{Theorem}[section]
\newtheorem{cor}[thm]{Corollary}
\newtheorem{lem}[thm]{Lemma}
\theoremstyle{definition}
\theoremstyle{remark}
\numberwithin{equation}{section}
\newcommand{\beast}{\begin{eqnarray*}}
\newcommand{\eeast}{\end{eqnarray*}}
\title{Degrees of irreducible polynomials over\\ binary field}
\author{Yaotsu Chang \footnote{Department of Financial and Computational Mathematics, I-Shou University, Taiwan R.O.C..}
\and Chong-Dao Lee\footnote{Department of Communication Engineering, I-Shou University, Taiwan R.O.C..}
\and Chia-an Liu\footnote{Corresponding author. E-mail address: liuciaan8@gmail.com, Department of Financial and Computational Mathematics, I-Shou University, Taiwan R.O.C..}}
\begin{document}
\maketitle

\bibliographystyle{plain}

\bigskip

\begin{abstract}
An algorithm for factoring polynomials over finite fields is given by Berlekamp in 1967. The main tool was the matrix $Q$ corresponding to each polynomial. This paper studies the degrees of polynomials over binary field that associated with their corresponding matrices $Q$ and irreducibility.\\
\smallskip
{\noindent\bf Keywords:}
Irreducible polynomial, binary field, Berlekamp matrix.\\
{\noindent\bf MSC2010:}
11T06.  
\end{abstract}

\section{Introduction}      \label{sec_introduction}
Let $F=\mathbb{F}_2=\{0,1\}$ be the binary finite field. Then for each polynomial $f(x)\in F[x]$ of degree $m,$ the \emph{Berlekamp matrix} $Q$ proposed in~\cite{b:67} of $f(x)$ is the $m\times m$ matrix over $F$ whose $i$th row represents $x^{2(i-1)}$ reduced modulo $f(x).$ Specifically,
\begin{equation}
x^{2i} \equiv \sum_{j=0}^{m-1}Q_{i+1,j+1}x^j~~~~~~(\text{mod}~f)
\nonumber
\end{equation}
for $i=0,1,\ldots,m-1.$

\medskip

\bigskip

\section{Preliminary}       \label{sec_preliminary}
Let $G$ be the Berlekamp matrix with respect to the polynomial $f(x)$ of degree $m$ over $F.$
It is not difficult to show that $G^m=I_m$ if and only if $f(x)$ has no square factors, where $I_m$ denotes the identity matrix of order $m.$
If $f(x)$ has no square factors then the \emph{order} $o(f(x))=o(G)$ of $f(x)$ is defined to be the least positive integer such that
\begin{equation}
G^{o(G)}=I_m.
\nonumber
\end{equation}
\begin{lem}     \label{lem_order}
Let $f(x)=\prod_{i=1}^{r} g_i(x)$ where $g_i(x)$ are distinct polynomials of order $d_i$ over $F$ for $1\leq i\leq r.$ Then
\begin{equation}
o(G)=\text{lcm}(d_1,d_2,\ldots,d_r).
\nonumber
\end{equation}
\end{lem}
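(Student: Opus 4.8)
The plan is to reinterpret the Berlekamp matrix $G$ as the matrix of the Frobenius endomorphism $\phi\colon a\mapsto a^2$ on the quotient algebra $A=F[x]/(f)$, expressed in the basis $1,x,\ldots,x^{m-1}$. Indeed, the defining relation $x^{2i}\equiv\sum_{j}Q_{i+1,j+1}x^j\pmod f$ says precisely that the $(i+1)$-th row of $G$ records the coordinates of $\phi(x^i)$, so that $G^n$ is the matrix of $\phi^n\colon a\mapsto a^{2^n}$ and $o(G)$ is the least positive integer $n$ for which $a^{2^n}=a$ holds for every $a\in A$. In the same way, for each factor the order $d_i=o(g_i)$ is the least positive integer with $a^{2^{d_i}}=a$ for all $a\in A_i:=F[x]/(g_i)$.

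Next I would invoke the Chinese Remainder Theorem. Since $o(G)$ is assumed to exist, $f$ has no square factors, and because $f=\prod_{i=1}^{r}g_i$ this forces the $g_i$ to be pairwise coprime; hence the natural map is an isomorphism of $F$-algebras $A\cong\prod_{i=1}^{r}A_i$. The crucial point is that this isomorphism intertwines the Frobenius maps, because squaring in a product of rings is computed coordinatewise. Choosing a basis of $A$ adapted to this decomposition therefore exhibits $G$ as similar to the block-diagonal matrix $G_1\oplus\cdots\oplus G_r$, where each $G_i$ is similar to the Berlekamp matrix of $g_i$.

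Finally I would combine two elementary facts: the order of a matrix is invariant under similarity, and the order of a block-diagonal matrix is the least common multiple of the orders of its blocks. These give $o(G)=\mathrm{lcm}\bigl(o(G_1),\ldots,o(G_r)\bigr)=\mathrm{lcm}(d_1,\ldots,d_r)$. The underlying reason is that for each $i$ the set $\{\,n>0:\phi^n=\mathrm{id}\text{ on }A_i\,\}$ is exactly the set of positive multiples of $d_i$, so $\phi^n=\mathrm{id}$ on all of $A$ iff every $d_i$ divides $n$, i.e.\ iff $\mathrm{lcm}(d_1,\ldots,d_r)\mid n$; the least such $n$ is the claimed value. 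I expect the only nonroutine step to be the second paragraph: checking that the CRT isomorphism genuinely respects the Frobenius action, so that $G$ becomes block-\emph{diagonal} rather than merely block-triangular, and recording that for a square-free $f$ the hypothesis that the $g_i$ are distinct already makes them pairwise coprime. Everything after the block decomposition is standard.
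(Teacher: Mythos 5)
Your proof is correct, and in fact there is no proof in the paper to compare it to: Lemma~\ref{lem_order} is stated there with no argument at all (only Lemma~\ref{lem_lcm} receives one), so your write-up fills a genuine gap rather than paralleling an existing route. The path you take --- reading $G$ as the matrix of the Frobenius map $\phi(a)=a^2$ on $A=F[x]/(f)$, which is $F$-linear precisely because $F=\mathbb{F}_2$; splitting $A\cong\prod_{i} F[x]/(g_i)$ by the Chinese Remainder Theorem; noting that a ring isomorphism automatically intertwines squaring, so that $G$ is similar to the block-diagonal matrix $G_1\oplus\cdots\oplus G_r$; and then computing the order of a block-diagonal matrix as the $\mathrm{lcm}$ of the block orders --- is sound at every step. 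Two points you flag are exactly the ones worth keeping explicit. First, \emph{distinct} is not the operative hypothesis: it is the existence of $o(G)$ (equivalently, square-freeness of $f$) that forces the $g_i$ to be pairwise coprime and each $d_i$ to be defined; note that you use only the correct direction here (order exists $\Rightarrow$ no square factor), which is prudent, since the paper's preliminary claim that $G^m=I_m$ holds \emph{whenever} $f$ is square-free is actually false as stated (e.g.\ $f=(x^2+x+1)(x^3+x+1)$ has $m=5$ but $o(G)=6$). Second, converting ``every block power is the identity'' into a divisibility condition requires exactly the fact that $\{n>0 : G_i^n=I\}$ is the set of positive multiples of $d_i$, which you record. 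As a byproduct, your Frobenius picture also justifies the paper's later unproved remark that $o(f)$ equals the lcm of the degrees of the irreducible factors of $f$, since Frobenius on $\mathbb{F}_{2^d}$ has order exactly $d$.
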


\medskip

It is not hard to have the following observation for least common multiple.
\begin{lem}     \label{lem_lcm}
For positive integers $n_1,n_2,\ldots,n_\ell$ and $k,$ the least common multiple
\begin{equation}
\text{lcm}(kn_1,kn_2,\ldots,kn_\ell)=k\cdot\text{lcm}(n_1,n_2,\ldots,n_\ell).
\nonumber
\end{equation}
\end{lem}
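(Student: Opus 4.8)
The plan is to prove the identity directly from the universal (divisibility) characterization of the least common multiple, avoiding any appeal to the formula relating lcm and gcd. Write $L = \text{lcm}(n_1, n_2, \ldots, n_\ell)$; I will show that $kL$ satisfies the two properties that uniquely determine $\text{lcm}(kn_1, kn_2, \ldots, kn_\ell)$, namely that it is a common multiple of the $kn_i$ and that it divides every common multiple of them.

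First I would verify that $kL$ is a common multiple: since $n_i \mid L$ for each $i$ by definition of $L$, multiplying through by $k$ gives $kn_i \mid kL$, so $kL$ is a common multiple of $kn_1, \ldots, kn_\ell$. Next, let $N$ be an arbitrary common multiple, so that $kn_i \mid N$ for every $i$. In particular $k \mid kn_1 \mid N$, so $k \mid N$ and we may write $N = kM$ for a positive integer $M$. Then $kn_i \mid kM$ gives $n_i \mid M$ for each $i$, whence $L \mid M$ by the defining property of $L$; multiplying by $k$ yields $kL \mid kM = N$. Thus $kL$ divides every common multiple, and combining the two properties gives $\text{lcm}(kn_1, kn_2, \ldots, kn_\ell) = kL$, as claimed.

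This argument is entirely elementary and I do not anticipate a genuine obstacle; the only point requiring a little care is the cancellation step, where one must first extract the factor $k$ from the common multiple $N$ before cancelling it against $kn_i \mid kM$. As an alternative one could argue prime-by-prime: for each prime $p$ the $p$-adic valuation satisfies $v_p(\text{lcm}(kn_1, \ldots, kn_\ell)) = \max_i v_p(kn_i) = v_p(k) + \max_i v_p(n_i) = v_p(kL)$, and since the valuations agree at every prime the two positive integers coincide. Either route settles the lemma in a few lines.
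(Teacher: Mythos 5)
Your proof is correct and follows essentially the same route as the paper's: both arguments multiply the divisibilities $n_i \mid L$ by $k$ to get that $kL$ is a common multiple, and then cancel the factor $k$ from an arbitrary common multiple (the paper applies this directly to $\text{lcm}(kn_1,\ldots,kn_\ell)$ itself) to obtain the reverse divisibility. Your version is marginally more careful in explicitly justifying that $k$ divides the common multiple before cancelling, a step the paper leaves implicit.
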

\begin{proof}
Let $A=\text{lcm}(kn_1,kn_2,\ldots,kn_\ell)$ and $B=\text{lcm}(n_1,n_2,\ldots,n_\ell).$ For each $i=1,2,\ldots,\ell,$ one has $kn_i|kB$ since $n_i|B,$ and hence $A|kB.$ On the other hand, $kn_i|A$ implies $n_i|A/k,$ and thus $B|A/k.$ The result follows.
\end{proof}

\bigskip

\section{Main results}      \label{sec_main_results}
Let $G$ be the Berlekamp matrix with respect to the polynomial $f(x)$ of degree $m$ over $F.$
The property $\mathcal{P}_1$ is defined as
\begin{equation}
\mathcal{P}_1:~G^m=I_m~~~\text{if and only if}~~~f(x)~\text{is irreducible}
\nonumber
\end{equation}
where $I_m$ is the identity matrix of order $m.$

\medskip

\begin{thm}
Let $f(x)$ be a polynomial over $F$ of degree $m\geq 2.$ Then $f(x)$ has the property $\mathcal{P}_1$ if and only if
\begin{equation}
m~\text{is an odd prime or 9}.
\nonumber
\end{equation}
\end{thm}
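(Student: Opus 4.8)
The plan is to read $\mathcal{P}_1$ as the assertion that the biconditional ``$G^m=I_m\Leftrightarrow f$ is irreducible'' holds for \emph{every} polynomial of degree $m$, and to reduce it to a purely arithmetic question about $m$. First I would record that for an irreducible $f$ of degree $m$ the matrix $G$ represents the Frobenius map $y\mapsto y^2$ on $F[x]/(f)\cong\mathbb{F}_{2^m}$, whose order is exactly $m$; hence $o(G)=m$ and $G^m=I_m$ \emph{always} holds in the irreducible case. Consequently the biconditional can fail only through a \emph{reducible} $f$ with $G^m=I_m$. Such an $f$ must be squarefree (a square factor produces a nilpotent in $F[x]/(f)$, so $G$ is singular and $G^m\neq I_m$); write $f=\prod_{i=1}^{r}g_i$ with the $g_i$ distinct irreducibles of degrees $d_1,\dots,d_r$ and $r\geq2$. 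Since the order of an irreducible polynomial equals its degree, Lemma~\ref{lem_order} gives $o(G)=\text{lcm}(d_1,\dots,d_r)$, so $G^m=I_m$ iff $\text{lcm}(d_1,\dots,d_r)\mid m$, which is equivalent to $d_i\mid m$ for every $i$. Thus $\mathcal{P}_1$ \emph{fails} for degree $m$ precisely when $m$ admits a \textbf{bad decomposition}: a way to write $m=d_1+\dots+d_r$ with $r\geq2$, each $d_i$ a proper divisor of $m$, and with the multiset realizable by distinct irreducibles, i.e.\ for each $d$ the number of parts equal to $d$ is at most $N(d)$, the number of monic irreducibles of degree $d$ over $F$.

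Next I would settle the two ``good'' families by showing they admit no bad decomposition. For an odd prime $m$ the only proper divisor is $1$, and since $N(1)=2$ at most two parts are available, so the parts sum to at most $2<m$. For $m=9$ the proper divisors are $1$ and $3$ with $N(1)=2$ and $N(3)=2$, so the parts sum to at most $2\cdot1+2\cdot3=8<9$. In both cases $m$ cannot be reached, so every reducible squarefree $f$ of degree $m$ has $o(G)\nmid m$, whence $G^m\neq I_m$ and the biconditional holds for all $f$; that is, $f$ has $\mathcal{P}_1$.

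It remains to produce a bad decomposition whenever $m\geq2$ is neither an odd prime nor $9$, i.e.\ for $m=2$, $m=4$, and every composite $m\geq6$ with $m\neq9$. The cases $m=2$ (parts $1,1$) and $m=4$ (parts $1,1,2$) are handled by hand. For composite $m\geq6$ with $m\neq9$, let $p$ be the smallest prime factor and set $d=m/p$; then $d$ is a proper divisor with $d\geq p$ (as $m\geq p^2$), and I would take the decomposition of $m$ into $p$ equal parts $d+\dots+d$. This is realizable as soon as $N(d)\geq p$, and since $p\leq d$ it suffices to know $N(d)\geq d$; this holds for all $d\geq5$ by Gauss's formula $N(d)=\tfrac1d\sum_{e\mid d}\mu(e)2^{d/e}$, while the only remaining instances $d\in\{3,4\}$ force $m\in\{6,8\}$, where $N(3)=2\geq2$ and $N(4)=3\geq2$. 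I expect the main obstacle to be exactly this realizability bookkeeping: the low-degree counts $N(1)=2$, $N(2)=1$, $N(3)=2$ are so small that the naive equal-part construction breaks down for small $m$, and it is precisely the shortage $N(3)=2<3$ that makes $9=3+3+3$ impossible and leaves $9$ as the unique perfect square that stays good; care is needed to verify that no other composite escapes for an analogous reason (for a prime square $p^2$ one checks $N(p)\geq p$, i.e.\ $2^p-2\geq p^2$, which fails only for $p\in\{2,3\}$, the cases $m=4$ and $m=9$).
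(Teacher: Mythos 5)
Your proposal is correct and takes essentially the same route as the paper: both arguments reduce $\mathcal{P}_1$ to the combinatorial question of whether $m$ can be written as a sum of proper divisors of $m$ with each value $d$ used at most $N(d)$ times, and both settle it with the same counts ($N(1)=N(3)=2$, $N(\ell)\geq\ell$ for $\ell>4$) and the same construction of equal-degree products of irreducibles (your ``smallest prime $p$, parts of size $m/p$'' is just a streamlined version of the paper's $m=n\ell$ case analysis, with $m=2,4$ handled by hand in both). The only cosmetic difference is the front end: the paper translates $G^m=I_m$ into ``$f$ divides $x^{2^m}-x$'' and uses the factorization of $x^{2^m}-x$, while you translate it into $o(G)=\operatorname{lcm}(d_1,\dots,d_r)\mid m$ via Lemma~\ref{lem_order} and the Frobenius interpretation of $G$; these are equivalent reformulations of the same reduction.
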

\begin{proof}
Note that $G^m=I_m$ if and only if $f(x)$ divides $x^{2^m}-x.$ Then $\mathcal{P}_1$ is also realized as
\begin{equation}
\mathcal{P}_1':~f(x)~\text{divides}~x^{2^m}-x~\text{if and only if}~f(x)~\text{is irreducible.}
\nonumber
\end{equation}
Moreover, since
\begin{equation}
x^{2^m}-x=\prod_{d\mid m}(\text{Irreducible polynomials of degree}~d),
\nonumber
\end{equation}
the property $\mathcal{P}_1'$ is equivalent to
\begin{equation}
\mathcal{P}_1'':~\text{If}~f(x)~\text{divides}~x^{2^m}-x~\text{then}~f(x)~\text{is irreducible.}
\nonumber
\end{equation}

To prove the sufficiency, suppose that $f(x)$ has the property $\mathcal{P}_1''.$
Note that the number of irreducible polynomials $N(\ell)$ over $F$ of degree $\ell$ is
\begin{equation}
N(\ell)\left\{\begin{array}{ll}
                =2 & \text{if}~\ell=1   \\
                 =\ell-1 & \text{if}~\ell=2,3,4 \\
                 \geq\ell & \text{if}~\ell>4
               \end{array}
\right..
\nonumber
\end{equation}
Hence the degree $m$ of $f(x)$ can not be written as $m=n\ell$ for some positive integers $1<n\leq\ell$ and $\ell\geq 5,$ or otherwise $N(\ell)\geq n$ and a product of $n$ irreducible polynomials of degree $\ell$ does not have $\mathcal{P}_1''.$ For the some reason, $m$ can not be written as $m=n\ell$ for some positive integers $1<n<\ell,$ either. Furthermore, $m\neq 2$ and $m\neq 4$ since both of $x(x+1)$ and $x(x+1)(x^2+x+1)$ do not have $\mathcal{P}_1''.$
To conclude the above argument, $m$ is an odd prime or $m=3\cdot 3=9.$

For the necessity, assume $m$ is an odd prime or $9.$ If $m$ is an odd prime, then
$$x^{2^m}-x=x(x+1)\prod\text{(Irreducible polynomials of degree $m$)},$$
and hence $f(x)$ is an irreducible polynomial of degree $m$ whenever $f(x)$ divides $x^{2^m}-x.$ Besides, if $m=9$ then
$$x^{2^9}-x=x(x+1)(x^3+x+1)(x^3+x^2+1)\prod\text{(Irreducible polynomials of degree $9$)},$$
and thus $f(x)$ is an irreducible polynomial of degree $9$ provided that $f(x)$ divides $x^{2^9}-x.$
It says that $f(x)$ has $\mathcal{P}_1'',$ and the proof is completed.
\end{proof}

\medskip

Let $G$ be the Berlekamp matrix with respect to the polynomial $f(x)$ of degree $m$ over $F.$ The property $\mathcal{P}_2$ is defined as
\begin{equation}
\mathcal{P}_2:~o(G)=m~~~\text{if and only if}~~~f(x)~\text{is irreducible}
\nonumber
\end{equation}
where $o(G)$ is the order of $G.$

\medskip

\begin{thm}
Let $f(x)$ be a polynomial over $F$ of degree $m\geq 2.$ If $f(x)$ has the property $\mathcal{P}_2$ then $m$ can be written as
\begin{equation}
m=p^i~~~~~~\text{or}~~~~~~m=p^iq
\nonumber
\end{equation}
for primes $p<q$ and positive integer $i.$
\end{thm}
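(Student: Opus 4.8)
The plan is to prove the contrapositive. By Lemma~\ref{lem_order} the order of a squarefree $f$ is the least common multiple of the degrees of its distinct irreducible factors, since an irreducible polynomial of degree $d$ has order $d$ (it divides $x^{2^k}-x$ exactly when $d\mid k$). The ``only if'' direction of $\mathcal{P}_2$ is automatic, so $\mathcal{P}_2$ holds precisely when \emph{no} reducible squarefree polynomial of degree $m$ has order $m$. Hence it suffices to show that whenever $m$ is \emph{not} of the form $p^i$ or $p^iq$, there is a reducible squarefree $f$ of degree $m$ with $o(f)=m$. Writing $m=\prod_j p_j^{a_j}$ with $p_1<p_2<\cdots$, failure of the allowed form means either (i) $m$ has at least three distinct prime divisors, or (ii) $m=p^aq^b$ with $p<q$ and $b\ge 2$ (i.e.\ the larger prime occurs to a power $\ge 2$). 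In both cases I must produce factor degrees, each dividing $m$, with sum $m$ and least common multiple $m$, and realizable over $F$ in the sense that no degree $d$ is used more than $N(d)$ times.

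For the construction I would let $p_1<p_2$ be the two smallest prime divisors of $m$ (they exist in both (i) and (ii)) and use the three proper divisors
\begin{equation}
d_1=\frac{m}{p_1},\qquad d_2=\frac{m}{p_2},\qquad d_3=\frac{m}{p_1p_2},
\nonumber
\end{equation}
with multiplicities $p_1-1$, $1$, and $p_2-p_1$ respectively. The identity $\frac{p_1-1}{p_1}+\frac{1}{p_2}+\frac{p_2-p_1}{p_1p_2}=1$ gives $(p_1-1)d_1+d_2+(p_2-p_1)d_3=m$, so the degrees sum to $m$. Since $d_3\mid d_1$, the least common multiple of the degrees is $\operatorname{lcm}(d_1,d_2)$, and this equals $m$ because $d_2=m/p_2$ carries the full power $p_1^{a_1}$ while $d_1=m/p_1$ carries $p_2^{a_2}$ and the full power of every remaining prime. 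Choosing distinct irreducible polynomials of these degrees and multiplying them then yields a squarefree $f$ with at least $p_2\ge 2$ factors, degree $m$, and $o(f)=m$ by Lemma~\ref{lem_order}, contradicting $\mathcal{P}_2$.

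The hard part will be realizability, i.e.\ verifying $N(d_1)\ge p_1-1$, $N(d_2)\ge 1$, and $N(d_3)\ge p_2-p_1$. The whole point of loading the multiplicity $p_1-1$ onto the \emph{large} degree $d_1$ (rather than using $d_1,d_2$ once each and padding with many copies of $d_3$) is to keep the multiplicity of the small degree $d_3$ down to $p_2-p_1$; this is where the counts $N(\ell)$, and hence the field $F=\mathbb{F}_2$, really enter. With this choice the bound $N(\ell)\ge\ell$ for $\ell\ge 5$, together with $N(1)=2$ and $N(3)=2$, suffices: in case (i) one has $d_1\ge p_2p_3$ and $d_3\ge p_3\ge 5$, while in case (ii) one has $d_1\ge p_2^2\ge 9$ and $d_3=p^{a-1}q^{b-1}\ge q\ge 3$. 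The degree $d_2$ has multiplicity $1$, so no bound is needed there; the only degree that can fall below $5$ is $d_3$, and a short check shows $d_3=3$ forces $m=18$ (where $p_2-p_1=1\le N(3)$) and $d_3=4$ cannot occur. In every remaining case $N(d_3)\ge d_3\ge p_2>p_2-p_1$ and $N(d_1)\ge d_1>p_1-1$, so the factorization is realizable. This settles all non-allowed $m$ and completes the contrapositive; the essential content is precisely this realizability bookkeeping.
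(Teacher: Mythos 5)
Your proof is correct and follows essentially the same strategy as the paper: argue by contradiction/contrapositive, split the non-allowed degrees into (i) at least three distinct prime divisors and (ii) $m=p^aq^b$ with $p<q$ and $b\ge 2$, construct a reducible squarefree polynomial of degree $m$ whose factor degrees have least common multiple $m$, and check realizability against the counts of irreducible polynomials. In case (ii) your triple $\bigl(m/p,\,m/q,\,m/(pq)\bigr)$ with multiplicities $(p-1,\,1,\,q-p)$ is literally the paper's decomposition $p^iq^j=(p-1)\cdot p^{i-1}q^j+1\cdot p^iq^{j-1}+(q-p)\cdot p^{i-1}q^{j-1}$. In case (i), however, your unified decomposition is in effect a \emph{correction} of the paper's: the identity printed there,
\begin{equation}
p_1p_2p_3=1\cdot p_1p_3+(p_2-p_1)\cdot p_2+(p_1-1)\cdot p_2p_3,
\nonumber
\end{equation}
is false as written (for $p_1p_2p_3=2\cdot 3\cdot 5$ the right-hand side is $10+3+15=28\neq 30$); the middle degree should be $p_3$ rather than $p_2$, which is exactly what your $d_3=m/(p_1p_2)$ produces, so your version is the one that actually compiles into a valid counterexample. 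One minor simplification: your careful handling of $d_3\in\{3,4\}$, forced because you invoke $N(\ell)\ge\ell$ only for $\ell\ge 5$, can be avoided by using the uniform bound $N(\ell)\ge\ell-1$ for all $\ell\ge 1$ (which the paper states); then in case (i) one has $N(d_3)\ge d_3-1\ge p_3-1\ge p_2>p_2-p_1$, and in case (ii) $N(d_3)\ge d_3-1\ge q-1\ge q-p$, settling realizability in one line.
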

\begin{proof}
Suppose to the contrary that $m$ can be written as a product of three pairwise coprime factors that more than $1,$ or $m=p^iq^j$ for primes $p<q$ and positive integers $i,j$ with $j\geq 2.$

Assume that $m$ can be written as $m=p_1p_2p_3,$ where positive integers $1<p_1<p_2<p_3$ are pairwise coprime. Since
\begin{equation}
p_1p_2p_3=1\cdot p_1p_3+(p_2-p_1)\cdot p_2+(p_1-1)\cdot p_2p_3,
\nonumber
\end{equation}
a product of $1,$ $p_2-p_1,$ and $p_1-1$ irreducible polynomials respectively of degrees $p_1p_3,$ $p_2,$ and $p_2p_3$ does not have the property $\mathcal{P}_2,$ which is a contradiction.
(It is quick to check that the number $N_2(\ell)$ of irreducible polynomials over $F$ of degree $\ell$ satisfies $N_2(\ell)\geq \ell-1$ for each positive integer $\ell,$ and $\text{lcm}(p_1p_3,p_2,p_2p_3)=p_1p_2p_3=m.$)

Next, suppose that $m$ can be written as $m=p^iq^j$ for primes $p<q$ and positive integers $i,j$ with $j\geq 2.$ Since
\begin{equation}
p^iq^j=(p-1)\cdot p^{i-1}q^j+1\cdot p^iq^{j-1}+(q-p)\cdot p^{i-1}q^{j-1},
\nonumber
\end{equation}
a product of $p-1,$ $1,$ and $q-p$ irreducible polynomials respectively of degrees $p^{i-1}q^j,$ $p^iq^{j-1},$ and $p^{i-1}q^{j-1}$ does not have the property $\mathcal{P}_2,$ which is a contradiction.
(It is immediate to check that $\text{lcm}(p^{i-1}q^j,p^iq^{j-1},p^{i-1}q^{j-1})=p^iq^j=m.$)
The result follows.
\end{proof}

\medskip

The case will be trivial if the degree $m$ is a prime power. It may be quick to show that $f(x)\in F[x]$ has $\mathcal{P}_2$ if its degree $m=p^i$ for some prime $p$ and positive integer $i.$
Now, focus on the case $m=p^iq$ for primes $p<q$ and positive integer $i.$

\medskip

Note that the order $o(f(x))$ of $f(x)$ is the least common multiple (l.c.m.) of the degrees of factors in $f(x).$
Then a quick observation is given below.
\begin{lem}     \label{lem_p^i}
Assume that $f(x)\in F[x]$ is of degree $m=p^iq$ for primes $p<q$ and positive integer $i.$ If the order $o(f(x))=m$ then there exists a factor of $f(x)$ that of degree $p^i$ or $p^iq.$
\end{lem}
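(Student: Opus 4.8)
The plan is to read off the conclusion directly from the irreducible factorization of $f$ together with the identification, recalled immediately before the lemma, of $o(f(x))$ with the least common multiple of the degrees of the irreducible factors. Since $o(f(x))$ is defined, $f$ has no square factors, so I would write $f(x)=\prod_{j=1}^{r}g_j(x)$ with the $g_j(x)$ distinct irreducible polynomials, say $\deg g_j=d_j$. The hypothesis $o(f(x))=m$ then becomes
\begin{equation}
\text{lcm}(d_1,d_2,\ldots,d_r)=m=p^iq. \nonumber
\end{equation}
In particular every $d_j$ divides $p^iq$, and hence each $d_j$ is of the form $p^a$ or $p^aq$ with $0\leq a\leq i$.

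The key step is to track the exponent of $p$. The exponent of $p$ in $\text{lcm}(d_1,\ldots,d_r)$ equals the maximum, over all $j$, of the exponent of $p$ in $d_j$; since the left-hand side above is $p^iq$, this maximum must equal $i$. Therefore there is an index $j_0$ with $p^i\mid d_{j_0}$. Combining $p^i\mid d_{j_0}$ with $d_{j_0}\mid p^iq$ leaves only the possibilities $d_{j_0}=p^i$ or $d_{j_0}=p^iq$, so $g_{j_0}(x)$ is a factor of $f(x)$ of one of the two required degrees.

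The argument is essentially immediate once the order is rewritten as an lcm, so there is no genuine obstacle; the only point needing care is the catalogue of divisors of $p^iq$, namely the numbers $p^a$ and $p^aq$ for $0\leq a\leq i$, and the observation that exactly two of them, $p^i$ and $p^iq$, carry the full $p$-power $p^i$. I would also note that the degree identity $\sum_j d_j=m$ is not used here—only $o(f(x))=p^iq$ enters—which is worth bearing in mind for the finer analysis of the case $m=p^iq$ that follows.
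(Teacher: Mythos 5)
Your proof is correct: the paper itself gives no argument for this lemma (it is stated as a ``quick observation'' followed immediately by a \qed), and your argument---each $d_j$ divides $\mathrm{lcm}(d_1,\ldots,d_r)=p^iq$, and the $p$-adic valuation of the lcm is the maximum of the valuations of the $d_j$, forcing some $d_{j_0}\in\{p^i,\,p^iq\}$---is exactly the intended justification via the lcm characterization of the order stated just before the lemma. No gaps; your side remark that only $o(f(x))=p^iq$ (and not $\sum_j d_j=m$) is used is also accurate.
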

\qed

\medskip

\begin{cor}
Let $f(x)\in F[x]$ of degree $m=p^iq$ for primes $p<q$ and positive integer $i.$ Then the following properties follow.
\begin{itemize}
\item[(i)]      If $f(x)$ has $\mathcal{P}_2$ then a polynomial $\tilde{f}(x)\in F[x]$ of degree $\tilde{m}=p^{\tilde{i}}q$ with positive integer $\tilde{i}\leq i$ also has $\mathcal{P}_2.$
\item[(ii)]     If $q>2^{p^i}$ then $f(x)$ has $\mathcal{P}_2.$
\item[(iii)]    If $p^i=2$ then $q>4$ if and only if $f(x)$ has $\mathcal{P}_2.$
\item[(iv)]     If $q>p^i>2$ and $f(x)$ has $\mathcal{P}_2,$ then $(p^i-2)q>2^{p^i}-2^{p^{i-1}}+1.$ If $q<p^i$ and $f(x)$ has $\mathcal{P}_2,$ then $(q-2)p^i>2^q.$
\end{itemize}
\end{cor}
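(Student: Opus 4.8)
The plan is to recast $\mathcal{P}_2$ as a purely combinatorial statement. Since $o(f)=\text{lcm}$ of the degrees of the distinct irreducible factors of a square-free $f$, the degree $m$ fails $\mathcal{P}_2$ precisely when there is a \emph{bad factorization}: a multiset of integers $d_1,\dots,d_r$ with $r\ge 2$, each $d_j\mid m$, $\sum_j d_j=m$, $\text{lcm}(d_1,\dots,d_r)=m$, and which is \emph{realizable}, i.e. the number of $d_j$ equal to any value $\ell$ is at most $N_2(\ell)$. For $m=p^iq$ the admissible part sizes are exactly the $p^a$ and $p^aq$ with $0\le a\le i$; by Lemma~\ref{lem_p^i} a bad factorization must contain a part equal to $p^i$, and to force $q\mid\text{lcm}$ it must contain a part divisible by $q$. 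Throughout I will use the identity $\sum_{d\mid n}d\,N_2(d)=2^n$, which follows from $x^{2^n}-x=\prod_{d\mid n}\prod(\text{irreducibles of degree }d)$; in particular $p^iN_2(p^i)=2^{p^i}-2^{p^{i-1}}$ and $q N_2(q)=2^q-2$, and I will use $N_2(\ell)\ge 1$ together with the two degree-one factors $x$ and $x+1$.

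For (ii) I would split the parts of a hypothetical bad factorization into those divisible by $q$, of degrees $p^{a_\ell}q$ and total $cq$ with $c=\sum_\ell p^{a_\ell}\ge 1$, and the remaining power-of-$p$ parts, whose total is $S=m-cq=(p^i-c)q$. The mandatory degree-$p^i$ part gives $S\ge p^i>0$, so $c\le p^i-1$ and hence $S\ge q$. On the other hand the power-of-$p$ parts are distinct irreducibles of degree dividing $p^i$, so $S\le\sum_{d\mid p^i}d\,N_2(d)=2^{p^i}$. Therefore $q\le 2^{p^i}$, and the contrapositive is exactly (ii).

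Part (iii) then follows quickly: the implication $q>4\Rightarrow\mathcal{P}_2$ is (ii) with $p^i=2$ (as $4=2^2$), while if $q\le 4$ then $q=3$, $m=6$, and $\{3,2,1\}$ is a realizable bad factorization, so $\mathcal{P}_2$ fails. For (i) and (iv) I would argue by contrapositive, producing explicit bad factorizations. For the first case of (iv), $q>p^i>2$, when $(p^i-2)q\le 2^{p^i}-2^{p^{i-1}}+1=p^iN_2(p^i)+1$, I would take $p^i-1$ parts of degree $q$ (legitimate since $N_2(q)$ is large), leaving a residual $q$ to be covered by power-of-$p$ parts that include at least one of degree $p^i$; the number of degree-$p^i$ parts can run up to $N_2(p^i)$, and the two degree-one factors (and smaller $p^b$) absorb the remainder. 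The second case $q<p^i$ is symmetric, trading the roles of $p^i$ and $q$ and measuring the degree-$q$ supply by $qN_2(q)=2^q-2$ against $(q-2)p^i>2^q$. For (i) I would induct on $i$, lifting a bad factorization of $p^{i-1}q$ to one of $p^iq$ by upgrading its degree-$p^{i-1}$ part to degree $p^i$ and appending degree-$q$ and degree-one factors to restore the total $p^iq$.

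The inequalities themselves are routine; the real obstacle in (i) and (iv) is \emph{realizability}: one must check that no prescribed multiplicity exceeds the corresponding $N_2(\ell)$ and, more delicately, that the residual degree is \emph{exactly} a subset-sum of the available distinct irreducibles. The identity $\sum_{d\mid n}d\,N_2(d)=2^n$ controls the multiplicities, and the two degree-one factors give fine adjustment; the subtle point is large $p$, where subset-sums of the degrees $1,p,\dots,p^i$ leave gaps, so one may have to trade a single degree-$q$ part for power-of-$p$ parts to land on a representable residual. Verifying that this adjustment always succeeds inside the stated bound is where I expect essentially all the effort to go.
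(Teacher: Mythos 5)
Your combinatorial reformulation of $\mathcal{P}_2$ (``bad factorizations'' with realizability constraints $N_2(\ell)$) is a faithful formalization of what the paper does implicitly, and your arguments for (ii) and (iii) are correct and essentially the paper's: the paper observes that the power-of-$p$ factors have total degree at most $\deg\left(x^{2^{p^i}}-x\right)=2^{p^i}<q$ while that total must be $\equiv 0 \pmod q$, so all factors have degree divisible by $q$, and Lemma~\ref{lem_p^i} then forces a single factor of degree $p^iq$; your contrapositive bound $q\le S\le 2^{p^i}$ is the same computation. The genuine gaps are in (i) and (iv), exactly where you yourself locate ``essentially all the effort'' --- and those gaps are real: the specific recipes you sketch fail in concrete cases, not just in delicacy.

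For (i), the paper does not lift a bad factorization by upgrading one part and appending fillers; it multiplies \emph{every} part by $k=p^{i-\tilde i}$, so the total degree and (by Lemma~\ref{lem_lcm}) the lcm both scale to $p^iq$ simultaneously, with nothing left to fill. Your upgrade-and-append scheme hits an arithmetic obstruction: for $p=3$, $i=2$, $q=5$, lifting the bad factorization $\{3,5,5,1,1\}$ of $15$ by upgrading $3\mapsto 9$ leaves a deficit of $24$, and $24=5a+b$ with $b\le 2$ has no solution, so degree-$q$ and degree-one fillers cannot close it. For (iv), fixing $p^i-1$ parts of degree $q$ leaves a residual $q$; after the mandatory degree-$p^i$ part, the remainder $q-p^i$ must be a sum of powers of $p$ using at most two $1$'s, which forces $q\equiv 1$ or $2\pmod p$ --- false, e.g., for $p^i=25$, $q=43$ (well within the stated bound), where $q-p^i=18\equiv 3\pmod 5$. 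The paper's device that removes this obstacle in one stroke is a modular pigeonhole step you are missing: since $\gcd(p^i,q)=1$, there exists $\hat u$ with $1\le\hat u\le p^i-2$ and $\hat u q\equiv 1$ or $2\pmod{p^i}$; taking $p^i-\hat u$ parts of degree $q$ makes the residual \emph{exactly} $\ell p^i+1$ or $\ell p^i+2$, covered by $\ell$ irreducibles of degree $p^i$ together with $x$ (or $x(x+1)$), and the hypothesis $(p^i-2)q\le 2^{p^i}-2^{p^{i-1}}+1$ is precisely what guarantees $\ell\le N_2(p^i)=(2^{p^i}-2^{p^{i-1}})/p^i$. The second half of (iv) is the same argument with $p^i$ and $q$ interchanged. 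So your (ii)--(iii) stand, but (i) needs the scaling lemma and (iv) needs the residue-$1$-or-$2$ trick; these two ideas are the actual content of the paper's proof.
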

\begin{proof}
(i) is direct from Lemma~\ref{lem_lcm}.

\smallskip

To prove (ii), suppose $q>2^{p^i}$ and the order $o(f(x))$ of $f(x)$ equals $m=p^iq.$ Since $f(x)$ is of degree $m$ and divides
\begin{equation}
x^{2^m}-x=(x^{2^{p^i}}-x)\prod_{j=1}^i\left(\text{Irreducible polynomials of degree}~qj\right),
\nonumber
\end{equation}
the degree of each factor of $f(x)$ is a multiple of $q.$ Hence by Lemma~\ref{lem_p^i} there is a factor in $f(x)$ that of degree $p^iq,$ which means that $f(x)$ is composed of exactly one factor polynomial and is irreducible.

\smallskip

The sufficiency of (iii) is straightforward from (ii). Then the necessity of (iii) is only to examine the case $q=3,$ \emph{i.e.}, $m=2\cdot 3=6.$ One can see that the reducible polynomial $f(x)=x(x^2+x+1)(x^3+x+1)$ is of order $o(f(x))=6,$ and thus does not have $\mathcal{P}_2.$ The proof of (iii) is completed.

\smallskip

To prove the first part of (iv), suppose to the contrary that $q>p^i>2$ and $f(x)$ has $\mathcal{P}_2,$ but $(p^i-2)q\leq2^{p^i}-2^{p^{i-1}}+1.$ Since $p<q$ are two distinct primes, one has $\gcd(p^i,q)=1,$ and thus
\begin{equation}
\{uq~(\text{mod}~p^i)~\mid~u=1,2,\ldots,p^i-1\}=\{1,2,\ldots,p^i-1\}.
\nonumber
\end{equation}
By Pigeon Hole Principle, there exists positive integer $\hat{u}$ with $1\leq u\leq p^i-2$ such that
$\hat{u}q\equiv 1~\text{or}~2~(\text{mod}~p^i).$ If $\hat{u}q\equiv 1~(\text{mod}~p^i),$ say, $\hat{u}q=\ell p^i+1,$ then the product of $x$ and $\ell$ irreducible polynomials of degree $p^i$ and $p^i-\hat{u}$ irreducible polynomials of degree $q$ does not has $\mathcal{P}_2,$ which is a contradiction. (Note that the fact $N(p^i)=(2^{p^i}-2^{p^{i-1}})/p^i$ and $(p^i-2)q\leq2^{p^i}-2^{p^{i-1}}+1$ implies $\ell$ exists.) On the other hand, if $\hat{u}q\equiv 2~(\text{mod}~p^i),$ say, $\hat{u}q=\ell p^i+2,$ then the product of $x(x+1)$ and $\ell$ irreducible polynomials of degree $p^i$ and $p^i-\hat{u}$ irreducible polynomials of degree $q$ does not has $\mathcal{P}_2,$ which makes a contradiction.
Similarly, to show the second part of (iv), suppose to the contrary that $q<p^i$ and $f(x)$ has $\mathcal{P}_2,$ but $(q-2)p^i\leq2^q.$ An analogue version of the above contradiction will occur by exchanging the positions of $p^i$ and $q.$ The result follows.
\end{proof}

\medskip

%

\end{document}